\documentclass{article}
\usepackage{amssymb,amsmath}
\usepackage[mathscr]{euscript}
\title{Fixed Points for $E$-Asymptotic Contractions and Boyd-Wong
Type $E$-Contractions in Uniform Spaces\\[0.3cm]}
\author{{Aris Aghanians$^1$,\,\,\,Kamal Fallahi$^1$,\,\,\,Kourosh Nourouzi$^{1}$
\thanks{ Corresponding
author } \thanks {e-mail: nourouzi@kntu.ac.ir; fax: +98 21
22853650}}\\[0.4cm]
{\em $^1$ Department of Mathematics, K. N. Toosi University of Technology,}\\
{\em P.O. Box 16315-1618, Tehran, Iran.}}
\newenvironment{proof}{\noindent{\em{Proof.}}}{$\hfill\square$
\medskip}
\newtheorem{defn}{Definition}
\newtheorem{cor}{Corollary}
\newtheorem{exm}{Example}
\newtheorem{thm}{Theorem}

\newtheorem{rem}{Remark}
\newtheorem{lem}{Lemma}

\begin{document}
\maketitle \begin{abstract}{In this paper we discuss on the fixed
points of asymptotic contractions and Boyd-Wong type contractions
in uniform spaces equipped with an $E$-distance. A new version of
Kirk's fixed point theorem is given for asymptotic
contractions and Boyd-Wong type contractions is investigated in uniform spaces.\\
{\bf Keywords:} Separated uniform space; $E$-asymptotic
contraction; Boyd-Wong type $E$-contraction; Fixed point.}
\end{abstract}
\def\thefootnote{ \ }
\footnotetext{{\em} $2010$ Mathematics Subject Classification.
47H10, 54E15, 47H09.}

\section{Introduction and Preliminaries}
In 2003, Kirk \cite{kirk} discussed on the existence of fixed
points for (not necessarily continuous) asymptotic contractions in
complete metric spaces. Jachymski and J\'o\'zwik \cite{jac}
constructed an example to show  that being continuity of the
self-mapping is essential in Kirk's theorem. They also established
a fixed point result for uniformly continuous asymptotic
$\varphi$-contractions in complete metric spaces.\par Motivated by
\cite[Theorem 2.1]{kirk} and \cite[Example 1]{jac} we aim to give
a more general form of \cite[Theorem 2.1]{kirk} in uniform spaces
where the self-mappings are supposed to be continuous. We also
generalize the Boyd-Wong fixed point theorem \cite[Theorem 1]{boy}
to the uniform spaces equipped with an $E$-distance.\par We begin
with some basics in uniform spaces which are needed in this
paper. The reader can find an in-depth discussion in, e.g.,
\cite{wil} and recent results on the fixed points in uniform spaces in \cite{kn}.\par A uniformity on a nonempty set $X$ is a nonempty
collection $\mathscr U$ of subsets of $X\times X$ (called the
entourages of $X$) satisfying the following conditions:
\begin{description}
\item i) Each entourage of $X$ contains the diagonal $\{(x,x):x\in
X\}$; \item ii) $\mathscr U$ is closed under finite intersections;
\item iii) For each entourage $U$ in $\mathscr U$, the set
$\{(x,y):(y,x)\in U\}$ is in $\mathscr U$; \item iv) For each
$U\in\mathscr U$, there exists an entourage $V$ such that
$(x,y),(y,z)\in V$ implies $(x,z)\in U$ for all $x,y,z\in X$;
\item v) $\mathscr U$ contains the supersets of its elements.
\end{description}
If $\mathscr U$ is a uniformity on $X$, then $(X,{\mathscr U})$ (shortly denoted by $X$) is called a uniform space.\par If $d$ is a
metric on a nonempty set $X$, then it induces a uniformity, called
the uniformity induced by the metric $d$, in which the entourages
of $X$ are all the supersets of the sets
$$\big\{(x,y)\in X\times X:d(x,y)<\varepsilon\big\},$$
where $\varepsilon>0$.\par It is well-known that a uniformity
$\mathscr U$ on a nonempty set $X$ is separating if the
intersection of all entourages of $X$ coincides with the diagonal
$\{(x,x):x\in X\}$. In this case $X$ is called a separated
uniform space.\par We next recall some basic concepts about
$E$-distances. For more details and examples the reader is
referred to \cite{aam1}.

\begin{defn}\rm\cite{aam1} Let $X$ be a uniform space. A function $p:X\times X\rightarrow\mathbb{R}^+$
is called an $E$-distance on $X$ if
\begin{description}
\item{\rm i)} for each entourage $U$ in $\mathscr U$, there exists
a $\delta>0$ such that $p(z,x)\leq\delta$ and $p(z,y)\leq\delta$
imply $(x,y)\in U$ for all $x,y,z\in X$; \item{\rm ii)} $p$
satisfies the triangular inequality, i.e.,
$$p(x,y)\leq p(x,z)+p(z,y)\qquad(x,y,z\in X).$$
\end{description}
\end{defn}

If $p$ is an $E$-distance on a uniform space $X$, then a sequence
$\{x_n\}$ in $X$ is said to be $p$-convergent to a point $x\in X$,
denoted by $x_n\stackrel{p}\longrightarrow x$, whenever
$p(x_n,x)\rightarrow0$ as $n\rightarrow\infty$, and $p$-Cauchy
whenever $p(x_m,x_n)\rightarrow0$ as $m,n\rightarrow\infty$. The
uniform space $X$ is called $p$-complete if every $p$-Cauchy
sequence in $X$ is $p$-convergent to some point of $X$.\par The
next lemma contains an important property of $E$-distances on
separated uniform spaces. The proof is straightforward and it is
omitted here.

\begin{lem}\label{1}{\rm\cite{aam1}}
Let $\{x_n\}$ and $\{y_n\}$ be two arbitrary sequences in a
separated uniform space $X$ equipped with an $E$-distance $p$. If
$x_n\stackrel{p}\longrightarrow x$ and
$x_n\stackrel{p}\longrightarrow y$, then $x=y$. In particular,
$p(z,x)=p(z,y)=0$ for some $z\in X$ implies $x=y$.
\end{lem}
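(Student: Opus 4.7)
The plan is to reduce both assertions to a single application of axiom (i) in the definition of an $E$-distance together with the separation hypothesis on $\mathscr U$. The underlying idea is simple: if two points $x$ and $y$ can be ``witnessed'' by a point $z$ (or a sequence of points) making $p(z,\cdot)$ arbitrarily small at both, then axiom (i) forces $(x,y)$ into every entourage, and separation of $X$ collapses this to the diagonal.

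First I would handle the ``in particular'' clause, since the sequential statement reduces to it. Assume $p(z,x)=p(z,y)=0$ for some $z\in X$, and fix an arbitrary entourage $U\in\mathscr U$. By axiom (i) of the $E$-distance, pick $\delta>0$ so that $p(z,a)\leq\delta$ and $p(z,b)\leq\delta$ entail $(a,b)\in U$ for all $a,b,z\in X$. Applied to $a=x$, $b=y$, the inequalities $0\leq\delta$ hold trivially, hence $(x,y)\in U$. Since $U$ was arbitrary, $(x,y)$ lies in $\bigcap\mathscr U$, which equals the diagonal because $X$ is separated, so $x=y$.

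For the main statement, suppose $x_n\stackrel{p}\longrightarrow x$ and $x_n\stackrel{p}\longrightarrow y$. Fix $U\in\mathscr U$ and let $\delta>0$ be as above. By $p$-convergence, choose $N$ so large that $p(x_n,x)\leq\delta$ and $p(x_n,y)\leq\delta$ for all $n\geq N$. Axiom (i), applied with $z=x_N$, yields $(x,y)\in U$; again $U$ was arbitrary, so $(x,y)$ belongs to the diagonal and $x=y$. (Alternatively, one could observe that $p(x_n,x)\to 0$ and $p(x_n,y)\to 0$ do not directly give $p(z,x)=p(z,y)=0$ for a single $z$, so the ``in particular'' clause does not strictly subsume the sequential version; a separate two-line argument via axiom (i) is cleanest.)

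There is no real obstacle here: the proof is a textbook verification, which is precisely why the authors state that ``the proof is straightforward and it is omitted.'' The only subtlety worth flagging is not to over-reduce the sequential case to the pointwise case, since the hypothesis only gives $p(x_n,x),p(x_n,y)\to 0$, not the existence of a single witness with $p(z,x)=p(z,y)=0$; the correct route is to invoke axiom (i) directly for sufficiently large $n$.
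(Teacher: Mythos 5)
Your proof is correct: both parts follow exactly as you argue, by feeding axiom (i) of the $E$-distance the witnesses $z$ (respectively $x_n$ for large $n$) to place $(x,y)$ in every entourage, and then invoking separatedness. The paper omits the proof as ``straightforward,'' and your argument is precisely the standard verification the authors had in mind, so there is nothing to compare beyond noting that your care about which clause subsumes which is sound (the ``in particular'' clause actually follows from the sequential one via the constant sequence $x_n=z$, but proving both directly as you do is equally clean).
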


Using $E$-distances, $p$-boundedness and $p$-continuity are
defined in uniform spaces.

\begin{defn}\rm\cite{aam1} Let $p$ be an $E$-distance on a uniform space $X$. Then,
\begin{description}
\item{\rm i)} $X$ is called $p$-bounded if
$$\delta_p(X)=\sup\big\{p(x,y):x,y\in X\big\}<\infty.$$
\item{\rm ii)} a mapping $T:X\rightarrow X$ is called
$p$-continuous on $X$ if $x_n\stackrel{p}\longrightarrow x$
implies $Tx_n\stackrel{p}\longrightarrow Tx$ for all sequences
$\{x_n\}$ and all points $x$ in $X$.
\end{description}
\end{defn}

\section{$E$-Asymptotic Contractions}
In this section, we denote by $\Phi$ the class of all functions
$\varphi:\Bbb{R}^{\geq0}\rightarrow\Bbb{R}^{\geq0}$ with the
following properties:
\begin{itemize}
\item $\varphi$ is continuous on $\Bbb{R}^{\geq0}$; \item
$\varphi(t)<t$ for all $t>0$.
\end{itemize}
It is worth mentioning that if $\varphi\in\Phi$, then
$$0\leq\varphi(0)=\lim_{t\rightarrow0^+}\varphi(t)\leq\lim_{t\rightarrow0^+}t=0,$$
that is, $\varphi(0)=0$.\par Following \cite[Definition
2.1]{kirk}, we define $E$-asymptotic contractions.

\begin{defn}\label{defnn}\rm Let $p$ be an $E$-distance on a uniform space $X$.
We say that a mapping $T:X\rightarrow X$ is an $E$-asymptotic
contraction if
\begin{equation}\label{asymp}
p(T^nx,T^ny)\leq\varphi_n\big(p(x,y)\big)\quad\mbox{for all}\
x,y\in X\ \mbox{and}\ n\geq1,
\end{equation}
where $\{\varphi_n\}$ is a sequence of nonnegative functions on
$\Bbb{R}^{\geq0}$ converging uniformly to some $\varphi\in\Phi$ on
the range of $p$.
\end{defn}

If $(X,d)$ is a metric space, then replacing the $E$-distance $p$ by
the metric $d$ in Definition \ref{defnn}, we get the concept of an
asymptotic contraction introduced by Kirk \cite[Definition
2.1]{kirk}. So each asymptotic contraction on a metric space is an
$E$-asymptotic contraction on the uniform space induced by the
metric. But in the next example, we see that the converse is not
generally true.

\begin{exm}\label{exmasymp}\rm Uniformize the set $X=[0,1]$ with the uniformity
induced from the Euclidean metric and put $p(x,y)=y$ for all
$x,y\in X$. It is easily verified that $p$ is an $E$-distance on
$X$. Define $T:X\rightarrow X$ and
$\varphi_1:\Bbb{R}^{\geq0}\rightarrow\Bbb{R}^{\geq0}$ by
$$Tx=\left\{\begin{array}{cc}
0&0\leq x<1\\\\
\displaystyle\frac18&x=1
\end{array}\right.,\qquad\varphi_1(t)=\left\{\begin{array}{cc}
\displaystyle\frac1{16}&0\leq t<1\\\\
\displaystyle\frac18&t\geq1
\end{array}\right.$$
for all $x\in X$ and all $t\geq 0$, and set $\varphi_n=\varphi$
for $n\geq2$, where $\varphi$ is any arbitrary fixed function in
$\Phi$. Clearly, $\varphi_n\rightarrow\varphi$ uniformly on
$\Bbb{R}^{\geq0}$ and $T^n=0$ for all $n\geq2$. To see that $T$ is
an $E$-asymptotic contraction on $X$, it suffices to check
(\ref{asymp}) for $n=1$. To this end, given $x,y\in[0,1]$, if
$y=1$, then we have
$$p(Tx,T1)=T1=\frac18=\varphi_1(1)=\varphi_1\big(p(x,1)\big),$$
and for $0\leq y<1$ we have
$$p(Tx,Ty)=Ty=0\leq\frac1{16}=\varphi_1(y)=\varphi_1\big(p(x,y)\big).$$
But $T$ fails to be an asymptotic contraction on the metric space
$X$ with the functions $\varphi_n$ since
$$\Big|T1-T\frac12\Big|=\frac18>\frac1{16}=\varphi_1\big(\frac12\big)=\varphi_1\Big(\big|1-\frac12\big|\Big).$$
\end{exm}

In the next example, we see that an $E$-asymptotic contraction
need not be $p$-continuous.

\begin{exm}\rm
Let $X$ and $p$ be as in Example \ref{exmasymp}. Define a mapping
$T:X\rightarrow X$ by $Tx=0$ if $0<x\leq1$ and $T0=1$. Note that
$T$ is fixed point free. Now, let $\varphi_1$ be the constant
function $1$ and $\varphi_2=\varphi_3=\cdots=\varphi$, where
$\varphi$ is an arbitrary function in $\Phi$. Then $T$ satisfies
(\ref{asymp}) and since $T0\neq0$, it follows that $T$ fails to be
continuous on.
\end{exm}

\begin{thm}\label{main} Let $p$ be an $E$-distance on a separated
uniform space $X$ such that $X$ is $p$-complete and let
$T:X\rightarrow X$ be a $p$-continuous $E$-asymptotic contraction
for which the functions $\varphi_n$ in Definition \ref{defnn} are
all continuous on $\Bbb{R}^{\geq0}$ for large indices $n$. Then
$T$ has a unique fixed point $u\in X$, and
$T^nx\stackrel{p}\longrightarrow u$ for all $x\in X$.
\end{thm}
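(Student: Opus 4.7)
The plan is to follow, \emph{mutatis mutandis}, Kirk's original scheme for asymptotic contractions on complete metric spaces, making two adjustments: the asymmetry of $p$ forces working with two-sided suprema, and the failure of topological Hausdorffness of $p$-convergence is patched by invoking Lemma~\ref{1} whenever uniqueness of limits is required. I would begin with uniqueness, as it is nearly immediate. If $Tu=u$ and $Tv=v$, then iterating (\ref{asymp}) gives $p(u,v)=p(T^n u,T^n v)\le\varphi_n(p(u,v))$ for every $n$, and uniform convergence $\varphi_n\to\varphi$ on the range of $p$ permits passing to the limit, producing $p(u,v)\le\varphi(p(u,v))$; since $\varphi(t)<t$ on $(0,\infty)$, this forces $p(u,v)=0$. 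The same argument with $v=u$ gives $p(u,u)=0$, and Lemma~\ref{1} then identifies $u$ with $v$.

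For existence, fix $x\in X$ and set $x_n=T^n x$. The target is to show $\{x_n\}$ is $p$-Cauchy; $p$-completeness then supplies a $p$-limit $u$, while $p$-continuity of $T$ makes $x_{n+1}=Tx_n$ $p$-converge to $Tu$. Since $\{x_{n+1}\}$ also $p$-converges to $u$, Lemma~\ref{1} delivers $Tu=u$. Repeating the Cauchy argument from an arbitrary starting point, and using uniqueness of the fixed point, then yields $T^n x\stackrel{p}{\longrightarrow} u$ for every $x\in X$.

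The main work, and the step I expect to be the principal obstacle, is the $p$-Cauchy claim itself. I would split it into two substeps. First, establish $p$-boundedness of the orbit, i.e., $D:=\sup_{i,j\ge 0}p(x_i,x_j)<\infty$. The natural route is by contradiction: if $D=\infty$, then large values of $p(x_i,x_j)$ pulled back through the identity $p(x_{n+i},x_{n+j})\le\varphi_n(p(x_i,x_j))$ together with uniform convergence $\varphi_n\to\varphi$ should produce, in the limit, an inequality $t\le\varphi(t)$ at arbitrarily large $t$ in the range of $p$, contradicting $\varphi(t)<t$. Executing this carefully, without any monotonicity of the $\varphi_n$ and invoking their continuity for large $n$ to pass to maxima over bounded intervals, is the delicate point.

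Once $D<\infty$, the second substep is the familiar diameter-contraction argument. Setting $D_n:=\sup_{i,j\ge n}p(x_i,x_j)$, the sequence $\{D_n\}$ is nonincreasing with limit $d\ge 0$. The estimate $D_n\le\sup_{i',j'\ge k}\varphi_{n-k}(p(x_{i'},x_{j'}))$ for $n>k$, together with uniform convergence on the range of $p$ and continuity of $\varphi$, gives $d\le\max_{[0,D_k]}\varphi$; letting $k\to\infty$ reduces this to $d\le\max_{[0,d]}\varphi$. The strict inequality $\varphi(t)<t$ on $(0,d]$ together with continuity of $\varphi$ forces the right-hand side strictly below $d$ unless $d=0$; hence $d=0$ and $\{x_n\}$ is $p$-Cauchy, completing the proof.
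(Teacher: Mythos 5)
Your overall architecture is Kirk's classical one (prove the orbit is $p$-bounded, then run the diameter-contraction argument on $D_n=\sup_{i,j\ge n}p(T^ix,T^jx)$), and the uniqueness argument and your second substep are fine as far as they go. The genuine gap is the first substep: the $p$-boundedness of the orbit, which you correctly flag as the delicate point but do not actually prove, and which I do not believe can be proved by the mechanism you describe. The contractive condition $p(T^{n}x_i,T^{n}x_j)\le\varphi_n\big(p(x_i,x_j)\big)$ only controls pairs of iterates whose indices are \emph{both} at least $n$; it says nothing directly about $p(x_0,x_j)$ for large $j$, and the only way to reach such pairs is the triangle inequality, which yields additive estimates of the form $p(x_0,x_m)\le p(x_0,x_N)+\varphi_N\big(p(x_0,x_{m-N})\big)$. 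Since $\varphi(t)<t$ carries no uniform gap at infinity (consider $\varphi(t)=t-1+e^{-t}$), such estimates are perfectly compatible with an orbit of linear growth, and no inequality of the form $t\le\varphi(t)$ at large $t$ ever materializes: choosing near-maximizing pairs only gives $t_k\le\varphi(s_k)+o(1)<s_k+o(1)$ with $s_k\ge t_k$ both unbounded, which is vacuous. This is precisely why Kirk's original theorem carries boundedness of some orbit as a \emph{hypothesis}; the stated point of Theorem~\ref{main} (see the remark following its proof) is that this hypothesis is dropped, so a proof that reinstates it as a lemma must do something essentially new exactly where you have left a blank.

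The paper closes this gap by abandoning the diameter argument altogether. Its Step 1 shows $p(T^nx,T^ny)\to0$ for each fixed pair $x,y$ (in particular $p(T^nx,T^{n+1}x)\to0$) via a $\limsup$/subsequence argument using the continuity of the limit function $\varphi$. Its Step 2 then proves the $p$-Cauchy property directly on a possibly unbounded orbit: assuming failure, one picks $\varepsilon>0$ and $m_k>n_k\ge k$ with $m_k$ \emph{minimal} such that $p(T^{m_k}x,T^{n_k}x)\ge\varepsilon$, deduces $p(T^{m_k}x,T^{n_k}x)\to\varepsilon$ from $p(T^{m_k-1}x,T^{m_k}x)\to0$, propagates $\limsup_k p(T^{m_k+i}x,T^{n_k+i}x)\ge\varepsilon$ by induction on $i$, and finally obtains $\varphi(\varepsilon)\ge\varepsilon$ by interchanging the iterated limits in $\lim_i\lim_k\varphi_i\big(p(T^{m_k}x,T^{n_k}x)\big)$ --- this interchange is exactly where the uniform convergence $\varphi_n\to\varphi$ and the continuity of the $\varphi_n$ for large $n$ are consumed. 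If you wish to keep your two-substep plan, you must either add boundedness of an orbit as a hypothesis (recovering only Kirk's setting, not Theorem~\ref{main}) or first establish the Cauchy property by an argument of the paper's type, at which point the diameter argument becomes redundant.
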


\begin{proof} We divide the proof into three steps.\vspace{0.2cm}
\par
{\bf Step 1: $\boldsymbol{p(T^nx,T^ny)\rightarrow0}$ as
$\boldsymbol{n\rightarrow\infty}$ for all $\boldsymbol{x,y\in
X}$.}\par Let $x,y\in X$ be given. Letting $n\rightarrow\infty$ in
(\ref{asymp}), we get
$$0\leq\limsup_{n\rightarrow\infty}p(T^nx,T^ny)\leq\lim_{n\rightarrow\infty}
\varphi_n\big(p(x,y)\big)=\varphi\big(p(x,y)\big)\leq
p(x,y)<\infty.$$ Now, if
$$\limsup_{n\rightarrow\infty}p(T^nx,T^ny)=\varepsilon>0,$$
then there exists a strictly increasing sequence $\{n_k\}$ of
positive integers such that
$p(T^{n_k}x,T^{n_k}y)\rightarrow\varepsilon$, and so by the
continuity of $\varphi$, one obtains
$$\varphi\big(p(T^{n_k}x,T^{n_k}y)\big)\rightarrow\varphi(\varepsilon)<\varepsilon.$$
Therefore  there is an integer $k_0\geq1$ such that
$\varphi(p(T^{n_{k_0}}x,T^{n_{k_0}}y))<\varepsilon$. So
(\ref{asymp}) yields
\begin{eqnarray*}
\varepsilon\hspace{-2mm}&=&\hspace{-2mm}\limsup_{n\rightarrow\infty}p(T^nx,T^ny)\cr\\[-.3cm]
&=&\hspace{-2mm}\limsup_{n\rightarrow\infty}p\big(T^n(T^{n_{k_0}}x),T^n(T^{n_{k_0}}y)\big)\cr\\[-.3cm]
&\leq&\hspace{-2mm}\lim_{n\rightarrow\infty}\varphi_n\big(p(T^{n_{k_0}}x,T^{n_{k_0}}y)\big)\cr\\[-.3cm]
&=&\hspace{-2mm}\varphi\big(p(T^{n_{k_0}}x,T^{n_{k_0}}y)\big)<\varepsilon,
\end{eqnarray*}
which is a contradiction. Hence
$$\limsup_{n\rightarrow\infty}p(T^nx,T^ny)=0.$$
Consequently,
$$0\leq\liminf_{n\rightarrow\infty}p(T^nx,T^ny)\leq\limsup_{n\rightarrow\infty}
p(T^nx,T^ny)=0,$$ that is, $p(T^nx,T^ny)\rightarrow0$.\vspace
{0.2cm}\par {\bf Step 2: The sequence $\boldsymbol{\{T^nx\}}$ is
$\boldsymbol p$-Cauchy for all $\boldsymbol{x\in X}$.}\par Suppose
that $x\in X$ is arbitrary. If $\{T^nx\}$ is not $p$-Cauchy, then
there exist  $\varepsilon>0$ and positive integers $m_k$ and
$n_k$ such that
$$m_k>n_k\geq k\quad\mbox{and}\quad
p(T^{m_k}x,T^{n_k}x)\geq\varepsilon\qquad k=1,2,\ldots\,.$$
Keeping fixed the integer $n_k$ for sufficiently large $k$, say
$k\geq k_0$, and using Step 1, we may assume without loss of
generality that $m_k>n_k$ is the smallest integer with
$p(T^{m_k}x,T^{n_k}x)\geq\varepsilon$, that is,
$$p(T^{m_k-1}x,T^{n_k}x)<\varepsilon.$$
Hence for each $k\geq k_0$, we have
\begin{eqnarray*}
\varepsilon\hspace{-2mm}&\leq&\hspace{-2mm}p(T^{m_k}x,T^{n_k}x)\cr\\[-.4cm]&\leq&\hspace{-2mm}
p(T^{m_k}x,T^{m_k-1}x)+p(T^{m_k-1}x,T^{n_k}x)\cr\\[-.4cm]&<&\hspace{-2mm}
p(T^{m_k-1}x,T^{m_k}x)+\varepsilon.
\end{eqnarray*}
If $k\rightarrow\infty$, since
$p(T^{m_k}x,T^{m_k-1}x)\rightarrow0$, it follows that
$p(T^{m_k}x,T^{n_k}x)\rightarrow\varepsilon$.\par We next show by
induction that
\begin{equation}\label{induct}
\limsup_{k\rightarrow\infty}p(T^{m_k+i}x,T^{n_k+i}x)\geq\varepsilon\qquad
i=1,2,\ldots\,.
\end{equation}
To this end, note first that from Step 1,
\begin{eqnarray*}
\varepsilon\hspace{-2mm}&=&\hspace{-2mm}\lim_{k\rightarrow\infty}p(T^{m_k}x,T^{n_k}x)=\limsup_{k\rightarrow\infty}
p(T^{m_k}x,T^{n_k}x)\cr\\[-.2cm]
&\leq&\hspace{-2mm}\limsup_{k\rightarrow\infty}\Big[p(T^{m_k}x,T^{m_k+1}x)+p(T^{m_k+1}x,T^{n_k+1}x)\cr\\[-.3cm]
&\ &\hspace{-2mm}+\,p(T^{n_k+1}x,T^{n_k}x)\Big]\cr\\[-.2cm]
&\leq&\hspace{-2mm}\limsup_{k\rightarrow\infty}p(T^{m_k}x,T^{m_k+1}x)+
\limsup_{k\rightarrow\infty}p(T^{m_k+1}x,T^{n_k+1}x)\cr\\[-.3cm]
&\ &\hspace{-2mm}+\,\limsup_{k\rightarrow\infty}p(T^{n_k+1}x,T^{n_k}x)\cr\\[-.2cm]
&=&\hspace{-2mm}\limsup_{k\rightarrow\infty}p(T^{m_k+1}x,T^{n_k+1}x),
\end{eqnarray*}
that is, (\ref{induct}) holds for $i=1$. If (\ref{induct}) is true
for an $i$, then
\begin{eqnarray*}
\varepsilon\hspace{-2mm}&\leq&\hspace{-2mm}\limsup_{k\rightarrow\infty}p(T^{m_k+i}x,T^{n_k+i}x)\cr\\[-.3cm]
&\leq&\hspace{-2mm}\limsup_{k\rightarrow\infty}\Big[p(T^{m_k+i}x,T^{m_k+i+1}x)+p(T^{m_k+i+1}x,T^{n_k+i+1}x)\cr\\[-.3cm]
&\ &\hspace{-2mm}+\,p(T^{n_k+i+1}x,T^{n_k+i}x)\Big]\cr\\[-.3cm]
&\leq&\hspace{-2mm}\limsup_{k\rightarrow\infty}p(T^{m_k+i+1}x,T^{n_k+i+1}x).
\end{eqnarray*}
Consequently, we have
$$\begin{array}{cclr}
\varphi(\varepsilon)\hspace{-2mm}&=&\hspace{-2mm}\displaystyle
\lim_{k\rightarrow\infty}\varphi\big(p(T^{m_k}x,T^{n_k}x)\big)&
\mbox{\big(by continuity of $\varphi$\big)}\\\\[-.3cm]
&=&\hspace{-2mm}\displaystyle\lim_{k\rightarrow\infty}
\displaystyle\lim_{i\rightarrow\infty}\varphi_i\big(p(T^{m_k}x,T^{n_k}x)\big)&
\mbox{\big(by pointwise convergence of $\{\varphi_i\}$\big)}\\\\[-.3cm]
&=&\hspace{-2mm}\displaystyle\lim_{i\rightarrow\infty}
\displaystyle\lim_{k\rightarrow\infty}\varphi_i\big(p(T^{m_k}x,T^{n_k}x)\big)&
\mbox{\big(by uniform convergence of $\{\varphi_i\}$\big)}\\\\[-.3cm]
&\geq&\hspace{-2mm}\displaystyle\limsup_{i\rightarrow\infty}
\displaystyle\limsup_{k\rightarrow\infty}p(T^{m_k+i}x,T^{n_k+i}x)&
\mbox{\big(by (\ref{asymp})\big)}\\\\[-.3cm]
&\geq&\hspace{-2mm}\varepsilon,&\mbox{\big(by (\ref{induct})\big)}
\end{array}$$
which is a contradiction. Therefore, $\{T^nx\}$ is
$p$-Cauchy.\vspace {0.2cm}\par {\bf Step 3: Existence and
Uniqueness of the fixed point.}\par Because $X$ is $p$-complete,
it is concluded from Steps 1 and 2 that the family
$\{\{T^nx\}:x\in X\}$ of Picard iterates of $T$ is
$p$-equiconvergent, that is, there exists a $u\in X$ such that
$T^nx\stackrel{p}\longrightarrow u$ for all $x\in X$. In
particular, $T^nu\stackrel{p}\longrightarrow u$. We claim that $u$
is the unique fixed point for $T$. To this end, note first that
since $T$ is $p$-continuous on $X$, it follows that
$T^{n+1}u\stackrel{p}\longrightarrow Tu$, and so, by Lemma
\ref{1}, we have $u=Tu$. And if $v\in X$ is a fixed point for $T$,
then
$$p(u,v)=\lim_{n\rightarrow\infty}p(T^{n}u,T^{n}v)\leq\lim_{n\rightarrow\infty}
\varphi_{n}\big(p(u,v)\big)=\varphi\big(p(u,v)\big),$$ which is
impossible unless $p(u,v)=0$. Similarly  $p(u,u)=0$ and
using Lemma \ref{1}  we get $v=u$.
\end{proof}

It is worth mentioning that the boundedness of some orbit of $T$
is not necessary in Theorem \ref{main} unlike \cite[Theorem
2.1]{kirk} or \cite[Theorem 4.1.15]{aga}.\par As a consequence of
Theorem \ref{main}, we have the following version of \cite[Theorem
3.1]{aam1}.

\begin{cor} Let $p$ be an $E$-distance on a separated
uniform space $X$ such that $X$ is $p$-complete and $p$-bounded
and let a mapping $T:X\rightarrow X$ satisfy
\begin{equation}\label{111}
p(Tx,Ty)\leq\varphi\big(p(x,y)\big)\quad\mbox{for all}\ x,y\in X,
\end{equation}
where $\varphi:\Bbb{R}^{\geq0}\rightarrow\Bbb{R}^{\geq0}$ is
nondecreasing and continuous with $\varphi^n(t)\rightarrow0$ for
all $t>0$. Then $T$ has a unique fixed point $u\in X$, and
$T^nx\stackrel{p}\longrightarrow u$ for all $x\in X$.
\end{cor}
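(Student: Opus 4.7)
The plan is to deduce this from Theorem \ref{main} by exhibiting $T$ as a $p$-continuous $E$-asymptotic contraction and then invoking the theorem directly. The candidate approximating sequence is $\varphi_n=\varphi^{n}$ (the $n$-th iterate of $\varphi$) and the candidate limit function is the zero function, which clearly lies in $\Phi$ (continuous on $\mathbb{R}^{\geq 0}$ and strictly below $t$ for $t>0$).

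First I would check the asymptotic contraction inequality by induction on $n$. The base case is assumption (\ref{111}); for the inductive step, monotonicity of $\varphi$ together with the induction hypothesis gives
$$p(T^{n+1}x,T^{n+1}y)\leq \varphi\bigl(p(T^{n}x,T^{n}y)\bigr)\leq \varphi\bigl(\varphi^{n}(p(x,y))\bigr)=\varphi^{n+1}\bigl(p(x,y)\bigr).$$
Each $\varphi^{n}$ is continuous (as a composition of continuous functions), so the last hypothesis of Theorem \ref{main} is automatic.

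The main step is to verify uniform convergence of $\{\varphi^{n}\}$ to $0$ on the range of $p$. Here $p$-boundedness is essential: the range of $p$ is contained in $[0,\delta_p(X)]$ with $M:=\delta_p(X)<\infty$. Because $\varphi$ is nondecreasing, every iterate $\varphi^{n}$ is nondecreasing too, so for all $t\in[0,M]$ one has $\varphi^{n}(t)\leq \varphi^{n}(M)$. Since $\varphi^{n}(M)\to 0$ by hypothesis, given $\varepsilon>0$ we may pick $N$ with $\varphi^{n}(M)<\varepsilon$ for $n\geq N$, and then $\varphi^{n}(t)<\varepsilon$ uniformly in $t\in[0,M]$. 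This is the only place where $p$-boundedness enters, and it is what I expect to be the pivotal technical point.

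Finally, $p$-continuity of $T$ follows from (\ref{111}) and continuity of $\varphi$ at $0$: the pointwise convergence $\varphi^{n}(t)\to 0$ together with continuity of $\varphi$ forces $\varphi(0)=0$, so $x_n\stackrel{p}\longrightarrow x$ yields $p(Tx_n,Tx)\leq \varphi(p(x_n,x))\to\varphi(0)=0$. All hypotheses of Theorem \ref{main} are then met, and the existence of a unique fixed point $u$ with $T^{n}x\stackrel{p}\longrightarrow u$ for every $x\in X$ follows immediately.
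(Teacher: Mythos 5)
Your proposal is correct and follows essentially the same route as the paper: set $\varphi_n=\varphi^n$, use monotonicity to get $p(T^nx,T^ny)\leq\varphi^n(p(x,y))$, use $p$-boundedness to upgrade $\varphi^n(t)\to0$ to uniform convergence to the zero function (which lies in $\Phi$) on the range of $p$, and invoke Theorem \ref{main}. The only cosmetic difference is in establishing $\varphi(0)=0$: you derive it from continuity of $\varphi$ applied along $\varphi^n(t)\to0$, while the paper derives it from monotonicity alone; both are valid.
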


\begin{proof} Note first that $\varphi(0)=0$; for if
$0<t<\varphi(0)$ for some $t$, then the monotonicity of $\varphi$
implies that $0<t<\varphi(0)\leq\varphi^n(t)$ for all $n\geq1$,
which contradicts with $\varphi^n(t)\rightarrow 0$.\par Next,
since $\varphi$ is nondecreasing, it follows that $T$ satisfies
$$p(T^nx,T^ny)\leq\varphi^n\big(p(x,y)\big)\quad\mbox{for all}\
x,y\in X\ \mbox{and}\ n\geq1.$$ Setting $\varphi_n=\varphi^n$ for
each $n\geq1$ in Definition \ref{defnn}, it is seen that
$\{\varphi_n\}$ converges pointwise to the constant function $0$
on $[0,+\infty)$, and since
$$\sup\Big\{\varphi^n\big(p(x,y)\big):x,y\in
X\Big\}\leq\varphi^n\big(\delta_p(X)\big)\rightarrow0,$$ it
follows that $\{\varphi_n\}$ converges uniformly to $0$ on the
range of $p$. Because the constant function $0$ belongs to $\Phi$,
it is concluded that $T$ is an $E$-asymptotic contraction on $X$.
Moreover, $\varphi_n$'s are all continuous on $\Bbb{R}^{\geq0}$
and (\ref{111}) ensures that $T$ is $p$-continuous on $X$.
Consequently, the result follows immediately from Theorem
\ref{main}.
\end{proof}

The next corollary is a partial modification of Kirk's theorem
\cite[Theorem 2.1]{kirk} in uniform spaces. One can find it with
an additional assumption, e.g., in \cite[Theorem 4.1.15]{aga}.

\begin{cor}\label{corasymp} Let $X$ be a complete metric space and $T:X\rightarrow
X$ be a continuous asymptotic contraction for which the functions
$\varphi_n$ in Definition \ref{defnn} are all continuous on
$\Bbb{R}^{\geq0}$ for large indices $n$. Then $T$ has a unique
fixed point $u\in X$, and $T^nx\rightarrow u$ for all $x\in X$.
\end{cor}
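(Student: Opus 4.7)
The plan is to derive Corollary \ref{corasymp} as a direct specialization of Theorem \ref{main}, by taking the $E$-distance $p$ to be the metric $d$ itself.

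First I would verify that if $(X,d)$ is a metric space and we equip $X$ with the uniformity induced by $d$, then the metric $d:X\times X\to\Bbb{R}^{\geq0}$ is an $E$-distance on $X$. The triangle inequality ii) is immediate. For i), given an entourage $U$ containing the basic set $\{(x,y):d(x,y)<\varepsilon\}$, the choice $\delta=\varepsilon/2$ works: $d(z,x)\leq\delta$ and $d(z,y)\leq\delta$ give $d(x,y)\leq 2\delta=\varepsilon$, so $(x,y)\in U$. Moreover the metric uniformity is separating since $d(x,y)=0\iff x=y$, so $X$ is a separated uniform space.

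Next I would check that each hypothesis of Corollary \ref{corasymp} translates to the corresponding hypothesis of Theorem \ref{main} with $p:=d$. Completeness of $(X,d)$ in the metric sense is exactly $d$-completeness in the sense of $E$-distances, since $d$-Cauchy means metric-Cauchy and $d$-convergence means metric convergence. The defining inequality (\ref{asymp}) for an asymptotic contraction in Kirk's sense coincides, when $p=d$, with the defining inequality for an $E$-asymptotic contraction; and the continuity of $T$ as a self-map of the metric space is exactly $d$-continuity in the sense of Definition (ii), because $d(x_n,x)\to 0$ iff $x_n\to x$ in the metric. The continuity of the functions $\varphi_n$ carries over unchanged.

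Thus all hypotheses of Theorem \ref{main} are met, and we conclude that $T$ has a unique fixed point $u\in X$ with $d(T^nx,u)\to 0$, i.e., $T^nx\to u$ in the metric, for every $x\in X$. I do not foresee a genuine obstacle here; the only point requiring any care is the verification that $d$ itself satisfies the $E$-distance axioms, which is a short direct check as above.
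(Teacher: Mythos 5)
Your proposal is correct and matches the paper's intent exactly: the paper presents Corollary \ref{corasymp} as an immediate specialization of Theorem \ref{main} obtained by taking the $E$-distance $p$ to be the metric $d$ on the uniformity induced by $d$, which is precisely what you do (the paper leaves the routine verification that $d$ is an $E$-distance implicit, while you spell it out). No gaps.
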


\section{Boyd-Wong Type $E$-Contractions}
In this section, we denote by $\Psi$ the class of all functions
$\psi:\Bbb{R}^{\geq0}\rightarrow\Bbb{R}^{\geq0}$ with the
following properties:
\begin{itemize}
\item $\psi$ is upper semicontinuous on $\Bbb{R}^{\geq0}$ from the
right, i.e.,
$$t_n\downarrow
t\geq0\quad\mbox{implies}\quad\limsup_{n\rightarrow\infty}\psi(t_n)\leq\psi(t);$$
\item $\psi(t)<t$ for all $t>0$, and $\psi(0)=0$.
\end{itemize}

It might be interesting for the reader to be mentioned that the
family $\Phi$ defined and used in Section 2 is contained in the
family $\Psi$ but these two families do not coincide. To see this,
consider the function $\psi(t)=0$ if $0\leq t<1$, and
$\psi(t)=\frac12$ if $t\geq1$. Then $\psi$ is upper semicontinuous
from the right  but it is not continuous on $\Bbb{R}^{\geq0}$.
Furthermore, the upper semicontinuity of $\psi$ on
$\Bbb{R}^{\geq0}$ from the right and the condition that
$\psi(t)<t$ for all $t>0$, do not imply that $\psi$ vanishes at
zero in general. In fact, the function
$\psi:\Bbb{R}^{\geq0}\rightarrow\Bbb{R}^{\geq0}$ defined by the
rule
$$\psi(t)=\left\{\begin{array}{cc}
a&t=0\\\\
\displaystyle\frac t2&0<t<1\\\\
\displaystyle\frac1{2t}&t\geq1
\end{array}\right.$$
for all $t\geq0$, where $a$ is an arbitrary positive real number,
confirms this claim.

\begin{thm}\label{boyd} Let $p$ be an $E$-distance on a separated
uniform space $X$ such that $X$ is $p$-complete and let
$T:X\rightarrow X$ satisfy
\begin{equation}\label{boydwong}
p(Tx,Ty)\leq\psi\big(p(x,y)\big)\quad\mbox{for all}\ x,y\in X,
\end{equation}
where $\psi\in\Psi$. Then $T$ has a unique fixed point $u\in X$
and $T^nx\stackrel{p}\longrightarrow u$ for all $x\in X$.
\end{thm}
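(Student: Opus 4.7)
The plan is to mimic the classical Boyd-Wong argument, but performed with the $E$-distance $p$ instead of a metric and with Lemma \ref{1} used in place of the Hausdorff property to collapse two $p$-limits to the same point. Fix $x \in X$ and set $x_n = T^n x$. I would first prove the one-step convergence $p(x_n, x_{n+1}) \to 0$: by the contractive inequality (\ref{boydwong}) and $\psi(t) < t$, the sequence $c_n = p(x_n, x_{n+1})$ is monotone decreasing, hence converges to some $c \geq 0$; the upper semicontinuity from the right of $\psi$ applied to $c_{n-1} \downarrow c$ yields $c \leq \limsup \psi(c_{n-1}) \leq \psi(c)$, forcing $c = 0$ (since otherwise $\psi(c) < c$). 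The symmetric one-step quantity $p(x_{n+1}, x_n)$ similarly tends to $0$.

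The crux is Step 2: showing that $\{x_n\}$ is $p$-Cauchy. I would argue by contradiction in the standard Boyd-Wong fashion. Assuming failure, pick $\varepsilon > 0$ and indices $m_k > n_k \geq k$ with $p(x_{m_k}, x_{n_k}) \geq \varepsilon$, where $m_k$ is chosen minimal so that $p(x_{m_k - 1}, x_{n_k}) < \varepsilon$. The triangle inequality together with Step 1 forces $p(x_{m_k}, x_{n_k}) \to \varepsilon$, and since each term is $\geq \varepsilon$, this convergence is from above. Next, the two-sided triangle bound
\begin{equation*}
p(x_{m_k}, x_{n_k}) \leq p(x_{m_k}, x_{m_k+1}) + p(x_{m_k+1}, x_{n_k+1}) + p(x_{n_k+1}, x_{n_k})
\end{equation*}
combined with Step 1 gives $\liminf_k p(x_{m_k+1}, x_{n_k+1}) \geq \varepsilon$. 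On the other hand, (\ref{boydwong}) yields $p(x_{m_k+1}, x_{n_k+1}) \leq \psi(p(x_{m_k}, x_{n_k}))$, and passing to a monotone decreasing subsequence of $p(x_{m_k}, x_{n_k})$ so that the right-upper-semicontinuity hypothesis applies, we get $\limsup_k \psi(p(x_{m_k}, x_{n_k})) \leq \psi(\varepsilon) < \varepsilon$, contradicting the previous liminf bound. This is the main obstacle, because $\psi$ is only upper semicontinuous from the \emph{right}, so I must be careful to present the sequence $p(x_{m_k}, x_{n_k})$ approaching $\varepsilon$ from above (or equivalently extract a suitable decreasing subsequence) before invoking the hypothesis on $\psi$.

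With the $p$-Cauchy property in hand, $p$-completeness yields $x_n \xrightarrow{p} u$ for some $u \in X$. The contraction (\ref{boydwong}) together with $\psi(t) \leq t$ shows that $T$ is $p$-continuous (indeed $p(Ty_n, Ty) \leq p(y_n, y) \to 0$ whenever $y_n \xrightarrow{p} y$), so $x_{n+1} = Tx_n \xrightarrow{p} Tu$; since $x_{n+1} \xrightarrow{p} u$ as well, Lemma \ref{1} forces $Tu = u$. For uniqueness, if $Tv = v$, then $p(u,v) = p(Tu, Tv) \leq \psi(p(u,v))$ rules out $p(u,v) > 0$, so $p(u,v) = 0$; the same argument applied with $v = u$ gives $p(u,u) = 0$, and Lemma \ref{1} concludes $v = u$. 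Finally, for an arbitrary starting point $x \in X$, Steps 1 and 2 apply verbatim to show $\{T^n x\}$ is $p$-Cauchy, its $p$-limit is a fixed point of $T$ by $p$-continuity, and hence equals $u$ by uniqueness, proving $T^n x \xrightarrow{p} u$ for all $x$.
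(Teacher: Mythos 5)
Your proposal is correct and follows essentially the same three-step architecture as the paper's proof: monotone decrease plus right upper semicontinuity of $\psi$ for the vanishing of the iterated distances, the Boyd--Wong minimal-index contradiction for the $p$-Cauchy property, and $p$-completeness with $p$-continuity and Lemma \ref{1} for existence and uniqueness. The only (harmless) deviations are that you prove Step 1 only for consecutive iterates and then obtain $T^nx\stackrel{p}\longrightarrow u$ for arbitrary $x$ via uniqueness of the fixed point, whereas the paper proves $p(T^nx,T^ny)\rightarrow0$ for all $x,y$ and invokes equiconvergence of the Picard family, and that you explicitly extract a decreasing subsequence before applying the right upper semicontinuity of $\psi$, a point the paper leaves implicit.
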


\begin{proof} We divide the proof into three steps as Theorem
\ref{main}.\vspace {0.2cm}\par {\bf Step 1:
$\boldsymbol{p(T^nx,T^ny)\rightarrow0}$ as
$\boldsymbol{n\rightarrow\infty}$ for all $\boldsymbol{x,y\in
X}$.}\par Let $x,y\in X$ be given. Then for each nonnegative
integer $n$, by the contractive condition (\ref{boydwong}) we have
\begin{equation}\label{boydboyd}
p(T^{n+1}x,T^{n+1}y)\leq\psi\big(p(T^nx,T^ny)\big)\leq
p(T^nx,T^ny).
\end{equation}
Thus, $\{p(T^nx,T^ny)\}$ is a nonincreasing sequence of
nonnegative numbers and so it converges decreasingly to some
$\alpha\geq0$. Letting $n\rightarrow\infty$ in (\ref{boydboyd}),
by the upper semicontinuity of $\psi$ from the right, we get
$$\alpha=\lim_{n\rightarrow\infty}p(T^{n+1}x,T^{n+1}y)\leq
\limsup_{n\rightarrow\infty}\psi\big(p(T^nx,T^ny)\big)\leq\psi(\alpha),$$
which is a contradiction unless $\alpha=0$. Consequently,
$p(T^nx,T^ny)\rightarrow0$.\vspace {0.2cm}\par {\bf Step 2: The
sequence $\boldsymbol{\{T^nx\}}$ is $\boldsymbol{p}$-Cauchy for
all $\boldsymbol{x\in X}$.}\par Let $x\in X$ be arbitrary and
suppose on the contrary that $\{T^nx\}$ is not $p$-Cauchy. Then
similar to the proof of Step 2 of Theorem \ref{main}, it is seen
that there exist an $\varepsilon>0$ and sequences $\{m_k\}$ and
$\{n_k\}$ of positive integers such that $m_k>n_k$ for each $k$
and $p(T^{m_k}x,T^{n_k}x)\rightarrow\varepsilon$. On the other
hand, for each $k$  by (\ref{boydwong}) we have
\begin{eqnarray*}
p(T^{m_k}x,T^{n_k}x)\hspace{-2mm}&\leq&\hspace{-2mm}p(T^{m_k}x,T^{m_k+1}x)+p(T^{m_k+1}x,T^{n_k+1}x)\cr\\[-.3cm]
&\ &\hspace{-2mm}+\ p(T^{n_k+1}x,T^{n_k}x)\cr\\[-.2cm]
&\leq&\hspace{-2mm}p(T^{m_k}x,T^{m_k+1}x)+\psi\big(p(T^{m_k}x,T^{n_k}x)\big)\cr\\[-.3cm]
&\ &\hspace{-2mm}+\ p(T^{n_k+1}x,T^{n_k}x).
\end{eqnarray*}
Letting $k\rightarrow\infty$ and using Step 1 and the upper
semicontinuity of $\psi$ from the right we obtain
\begin{eqnarray*}
\varepsilon\hspace{-2mm}&=&\hspace{-2mm}\lim_{k\rightarrow\infty}p(T^{m_k}x,T^{n_k}x)
=\limsup_{k\rightarrow\infty}p(T^{m_k}x,T^{n_k}x)\cr\\[-.2cm]
&\leq&\limsup_{k\rightarrow\infty}\Big[p(T^{m_k}x,T^{m_k+1}x)+\psi\big(p(T^{m_k}x,T^{n_k}x)\big)\cr\\[-.3cm]
&\ &\hspace{-2mm}+\ p(T^{n_k+1}x,T^{n_k}x)\Big]\cr\\[-.2cm]
&\leq&\hspace{-2mm}\limsup_{k\rightarrow\infty}p(T^{m_k}x,T^{m_k+1}x)+
\limsup_{k\rightarrow\infty}\psi\big(p(T^{m_k}x,T^{n_k}x)\big)\cr\\[-.3cm]
&\ &\hspace{-2mm}+\ \limsup_{k\rightarrow\infty}p(T^{n_k+1}x,T^{n_k}x)\cr\\[-.2cm]
&=&\hspace{-2mm}\limsup_{k\rightarrow\infty}
\psi\big(p(T^{m_k}x,T^{n_k}x)\big)\cr\\[-.2cm]
&\leq&\hspace{-2mm}\psi(\varepsilon),
\end{eqnarray*}
which is a contradiction. Therefore, $\{T^nx\}$ is
$p$-Cauchy.\vspace {0.2cm}\par {\bf Step 3: Existence and
uniqueness of the fixed point.}\par Since $X$ is $p$-complete, it
follows from Steps 1 and 2 that the family $\{\{T^nx\}:x\in X\}$
is $p$-equiconvergent to some $u\in X$. In particular,
$T^nu\stackrel{p}\longrightarrow u$. Since (\ref{boydwong})
implies the $p$-continuity of $T$ on $X$, it follows that
$T^{n+1}u\stackrel{p}\longrightarrow Tu$ and so, by Lemma \ref{1},
we have $u=Tu$, that is, $u$ is a fixed point for $T$. If $v\in X$
is a fixed point for $T$, then
$$p(u,v)=p(Tu,Tv)\leq\psi\big(p(u,v)\big),$$
which is impossible unless $p(u,v)=0$. Similarly $p(u,u)=0$.
Therefore using Lemma \ref{1}  one gets $v=u$.
\end{proof}

As an immediate consequence of Theorem \ref{boydwong} we have the
following fixed point result in metric spaces:

\begin{cor}\label{corboyd}
Let $X$ be a complete metric space and let a mapping
$T:X\rightarrow X$ satisfy
\begin{equation}\label{satisfy}
d(Tx,Ty)\leq\psi\big(d(x,y)\big)\quad\mbox{for all\ }x,y\in X,
\end{equation}
where $\psi\in\Psi$. Then $T$ has a unique fixed point $u\in X$
and $T^nx\rightarrow u$ for all $x\in X$.
\end{cor}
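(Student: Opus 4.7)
The plan is to obtain Corollary \ref{corboyd} as a direct specialization of Theorem \ref{boyd}, viewing the metric space $(X,d)$ as a separated uniform space under the metric uniformity and taking the $E$-distance to be $p:=d$.

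The only real content is the routine verification that $d$ itself is an $E$-distance on this uniform space. Condition (ii) is the triangle inequality. For condition (i), given an entourage $U$ there exists $\varepsilon>0$ with $\{(x,y):d(x,y)<\varepsilon\}\subseteq U$; taking $\delta=\varepsilon/2$, the triangle inequality shows that $d(z,x)\leq\delta$ and $d(z,y)\leq\delta$ force $d(x,y)\leq\varepsilon$, hence $(x,y)\in U$. The metric uniformity is separated since $d$ is definite, so $X$ is a separated uniform space. Moreover, $p$-convergence and $p$-Cauchyness reduce to metric convergence and the metric Cauchy property, so completeness of $(X,d)$ is precisely $p$-completeness.

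Under these identifications the hypothesis (\ref{satisfy}) is literally (\ref{boydwong}) and $\psi\in\Psi$ is given, so Theorem \ref{boyd} applies and produces a unique fixed point $u\in X$ with $T^nx\stackrel{p}\longrightarrow u$ for every $x\in X$; since $p=d$, this is exactly $d(T^nx,u)\to 0$, i.e., $T^nx\to u$ in the metric. There is no real obstacle; the proof is merely the translation between the metric setting and the $E$-distance framework.
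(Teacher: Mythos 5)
Your proposal is correct and matches the paper's intent exactly: the paper presents Corollary \ref{corboyd} as an immediate consequence of Theorem \ref{boyd} obtained by taking $p=d$ on the metric uniformity, which is precisely your specialization. The routine verification you supply (that $d$ is an $E$-distance, that the metric uniformity is separated, and that $p$-completeness and $p$-convergence reduce to their metric counterparts) is exactly the translation the paper leaves implicit.
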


\begin{exm}\rm Let the set $X=[0,1]$ be endowed with the
uniformity induced by the Euclidean metric and define a mapping
$T:X\rightarrow X$ by $Tx=0$ if $0\leq x<1$, and $T1=\frac14$.
Then $T$ does not satisfy (\ref{satisfy}) for any $\psi\in\Psi$
since it is not continuous on $X$. In fact, if $\psi\in\Psi$ is
arbitrary, then
$$\Big|T1-T\frac34\Big|=\frac14>\psi\big(\frac14\big)=\psi\Big(\big|1-\frac34\big|\Big).$$
Now set $p(x,y)=\max\{x,y\}$. Then $p$ is an $E$-distance on $X$
and $T$ satisfies (\ref{boydwong}) for the function
$\psi:\Bbb{R}^{\geq0}\rightarrow\Bbb{R}^{\geq0}$ defined by the
rule $\psi(t)=\frac t4$ for all $t\geq0$. It is easy to check that
this $\psi$ belongs to $\Psi$, and the hypotheses of Theorem
\ref{boyd} are fulfilled.
\end{exm}

\begin{rem}\rm In Theorem \ref{main} (Corollary \ref{corasymp}),
assume that for some index $k$  the function $\varphi_{k}$ belongs
to $\Phi$. Then Theorem \ref{boyd} (Corollary \ref{corboyd})
implies that $T^{k}$ and so $T$ has a unique fixed point $u$ and
$T^{kn}x\stackrel{p}\longrightarrow u$ for all $x\in X$. It
is concluded by the $p$-continuity of $T$ that the family
$\{\{T^nx\}:x\in X\}$ is $p$-equiconvergent to $u$. Hence the
significance of Theorem \ref{main} (Corollary \ref{corasymp}) is
whenever none of $\varphi_n$'s satisfy $\varphi_n(t)<t$ for all
$t>0$, that is, for each $n\geq1$ there exists a $t_n>0$ such
that $\varphi_n(t_n)\geq t_n$.
\end{rem}

\end{document}